\newtheorem{theorem}{Theorem}[section]
\numberwithin{figure}{section}
\theoremstyle{definition}
\theoremstyle{remark}
\newtheorem{remark}[theorem]{Remark}
\numberwithin{equation}{section}
	\DeclareMathOperator{\loc}{loc}
    \DeclareMathOperator{\Imag}{Im}
    \DeclareMathOperator{\Real}{Re}
\begin{document}

\title[principal frequencies and extension operators]{Principal frequencies of free non-homogeneous membranes and Sobolev extension operators}

\author{V.~Gol'dshtein, V.~Pchelintsev, A.~Ukhlov}

\begin{abstract}
Using the quasiconformal mappings theory and Sobolev extension operators, we obtain estimates of principal frequencies of free non-homogeneous membranes. The suggested approach is based on connections between divergence form elliptic operators and quasiconformal mappings. Free non-homogeneous membranes we consider as circular membranes in the quasiconformal geometry associated with non-homogeneity of membranes. As a consequence we get a connection between principal frequencies of free membranes and the smallest-circle problem (initially suggested by J.~J. Sylvester in 1857).
\end{abstract}
\maketitle
\footnotetext{\textbf{Key words and phrases:} elliptic equations, Sobolev spaces, extension operators.}
\footnotetext{\textbf{2010
Mathematics Subject Classification:} 35P15, 46E35, 30C65.}

\section{Introduction}

In the previous article \cite{GPU2020S} we studied the connections between quasiconformal mappings, the corresponding extension operator of Sobolev spaces and the Neumann eigenvalues of the Laplacian (principal frequencies of free homogeneous membranes). This article focused on estimates of principal frequencies of free non-homogeneous membranes (see, for example, \cite{Gr75}) which are circular membranes in the quasiconformal geometry associated with the divergence form elliptic operators. Hence the principal frequencies of free non-homogeneous membranes are the Neumann eigenvalues of the divergence form elliptic operators
\begin{equation*}\label{EllDivOper}
L_{A}=-\textrm{div} [A(z) \nabla u(z)], \quad z=(x,y)\in \Omega ,
\end{equation*}
are defined in bounded simply connected domains $\Omega \subset \mathbb R^2$.
We assume that the matrix $A$ is defined a.~e. in $\mathbb{R}^2$ and satisfies the following regularity conditions (see, for example, \cite{AIM,BGMR}):

\noindent
1) The matrix $A$ belongs to the class of all $2 \times 2$ symmetric matrix functions $A(w)=\left\{a_{kl}(w)\right\}$   with measurable entries $a_{kl}(w)$ defined in $\mathbb{R}^2$ and under additional condition $\textrm{det} A=1$ a.e. in $\mathbb R^2$.

\noindent
2) The matrix $A$ satisfies to the uniform ellipticity condition: there exists $1\leq K< \infty$ such that the inequality
\begin{equation}\label{UEC}
\frac{1}{K}|\xi|^2 \leq \left\langle A(w) \xi, \xi \right\rangle \leq K |\xi|^2, \,\,\, \text{a.e. in}\,\,\, \mathbb R^2,
\end{equation}
holds for every $\xi \in \mathbb C$, which can be written as
\begin{equation*}
|\mu(z)|\leq \frac{K-1}{K+1},\,\,\, \text{a.e. in}\,\,\, \Omega.
\end{equation*}

Under these conditions the matrix $A$ induces a quasiconformal mapping $\varphi_A:\mathbb R^2\to\mathbb R^2$ which is the solution to the Beltrami equation with the complex dilatation $\mu$ defined by the matrix $A$:
\begin{equation*}
\mu(z)=\frac{a_{22}(z)-a_{11}(z)-2ia_{12}(z)}{\det(I+A(z))},\quad I= \begin{pmatrix} 1 & 0 \\ 0 & 1 \end{pmatrix}.
\end{equation*}
Recall that such mappings are called as $A$-quasiconformal mappings \cite{GPU2020}.

In the present work we study the Neumann eigenvalues in an $A$-quasidisc $\mathbb{D}_A$ that can be considered as the unit disc in
the Riemannian geometry, generated by the matrix $A$ and/or by the $A$-quasiconformal mappings associated with $A$. Namely, $\mathbb{D}_A:=\varphi_A^{-1}(\mathbb D)$, where $\mathbb D\subset\mathbb R^2$ is the unit disc.  Since the quasiconformal mappings $\varphi_A$ are defined by the composition factor with conformal mappings we can assume that the area of $|\mathbb{D}_A|=|\mathbb D|=\pi$.

\begin{remark}
Recall that a quasidisc (an Alhfors domain) is the image of the unit disc $\mathbb{D}$ under a quasiconformal mapping $\varphi:\mathbb R^2\to\mathbb R^2$. Since any $A$-quasiconformal mapping $\varphi_A:\mathbb R^2\to\mathbb R^2$ is a quasiconformal one then an $A$-quasidisc is a quasidisc and satisfy to Ahlfors condition
\cite{Ahl66} also. We use the concept of the $A$-quasidisc because its relation with the matrix $A$.
\end{remark}

Since the matrix $A$ satisfies to the uniform ellipticity condition, the functional
\[
F_A(\nabla u,\Omega)=\left(\iint\limits_\Omega \left\langle A(z)\nabla u(z),\nabla u(z)\right\rangle\,dxdy \right)^{\frac{1}{2}},
\]
defines an equivalent seminorm in the Dirichlet space $L^{1,2}(\Omega)$ \cite{GPU2020}.
We denote the Dirichlet space $L^{1,2}(\Omega)$ with this new seminorm as  $L_{A}^{1,2}(\Omega)$. It is well known that quasiconformal mappings $\varphi:\Omega \to \widetilde{\Omega}$ between domains $\Omega,\widetilde{\Omega}\subset\mathbb R^2$ induces an isomorphism between  $L^{1,2}(\widetilde{\Omega})$ and $L^{1,2}(\Omega)$ according to the standard chain rule $\varphi^{\ast}(f)=f\circ\varphi $ \cite{VG75}. In \cite{GPU2020} we have refined this result for the case of the spaces $L^{1,2}_A(\widetilde{\Omega})$ and $L^{1,2}(\Omega)$ and on this basis we obtained estimates of semi-stability of the Neumann eigenvalues under quasiconformal mappings.


In the present work we establish a connection between the Neumann eigenvalue problem of elliptic operators in the divergence form and the smallest-circle problem.
The smallest-circle problem is a mathematical problem of computing the smallest circle that contains all of a given set of points in the Euclidean plane.

This connection will be given in
the form of sharp lower estimates for the first non-trivial Neumann eigenvalues via the radii of the smallest enclosing discs  (Theorem A).

Recall that according to the Min--Max Principle (see, for example, \cite{D95,M}) the first non-trivial Neumann eigenvalue $\mu_1(A,\Omega)$ of the divergence form elliptic operator $L_{A}$ can be represented as
$$
\mu_1(A,\Omega)=\min\left\{\frac{\|u \mid L^{1,2}_{A}(\Omega)\|^2}{\|u \mid L^{2}(\Omega)\|^2}:
u \in W^{1,2}_{A}(\Omega) \setminus \{0\},\,\, \iint\limits _{\Omega}u\, dxdy=0 \right\}.
$$
Hence, the value $\mu_1(A,\Omega)^{-\frac{1}{2}}$ is the best constant $B_{2,2}(A,\Omega)$ in the following Poincar\'e type inequality
$$
\inf\limits _{c \in \mathbb R} \|u-c \mid L^2(\Omega)\| \leq B_{2,2}(A,\Omega) \|u \mid L^{1,2}_{A}(\Omega)\|, \quad u \in W^{1,2}_{A}(\Omega).
$$
Let us remark that by the uniform ellipticity condition $B_{2,2}(A,\Omega)\leq \sqrt{K} B_{2,2}(I,\Omega)$.

The main result of the article gives a connection between principal frequencies of free non-homogeneous membranes and the smallest-circle problem. Namely:
\vskip 0.3cm
\noindent
{\bf Theorem A.}
\textit{Let $\mathbb D_A$ be an $A$-quasidisc. Then the following inequality holds
\begin{equation*}
\mu_1(A,\mathbb{D}_A) \geq  \frac{1}{4K} \cdot \left(\frac{j'_{1,1}}{R_{\mathbb{D}_A}}\right)^2,
\end{equation*}
where $K$ is the ellipticity constant, $R_{\mathbb{D}_A}=\max\limits_{\mathbb{D}(0,1)}|\varphi^{-1}(x)-\varphi^{-1}(0)|$ and $j'_{1,1}\approx 1.84118$ denotes the first positive zero of the derivative of the Bessel function $J_1$.
}

\vskip 0.3cm

In this work we give the estimates of exponential type for the radius $R_{\mathbb{D}_A}$ of the smallest disc that contains $\mathbb{D}_A$ (see Theorem~\ref{sil}). Such type estimates represent the partial solution of the smallest-circle problem (initially suggested by J.~J. Sylvester in 1857).

The exponential type estimates of  the radius $R_{\mathbb{D}_A}$ leads to the following estimates of principal frequencies of free non-homogeneous membranes in quasiconformal terms:

\vskip 0.3cm

\noindent
{\bf Theorem B.}
\textit{Let $\mathbb D_A$ be an $A$-quasidisc. Then the following inequality holds
\begin{equation*}
\mu_1(A,\mathbb{D}_A) \geq \frac{ 64 (j'_{1,1})^2}{K \exp \{2\pi K\}},
\end{equation*}
where $K$ is the ellipticity constant and $j'_{1,1}\approx 1.84118$ denotes the first positive zero of the derivative of the Bessel function $J_1$.
}

\vskip 0.3cm

Let us remind that a simply connected domain $\Omega\subset\mathbb R^2$ is called \textit{a $L^1_2$-extension domain} if there exists a continuous extension operator
$$
E:L^{1,2}_{A}(\Omega) \to L_A^{1,2}(\mathbb R^2).
$$

This extension operator does not depend on the matrix $A$, but only its norm depends on the matrix $A$. The Ahlfors domains \cite{Ahl66} represent an important subclass of $2$-extension domains.

The proof of Theorem A is based on the following general result:
\vskip 0.3cm
\noindent
{\bf Theorem C.}
\textit{ Let $\Omega$ and $\widetilde{\Omega}$ be Ahlfors domains (quasidisc). If $\widetilde{\Omega}\supset \Omega$ then
$$
\mu_1(A,\Omega) \geq \frac{\mu_1(A,\widetilde{\Omega})}{\|E_{\Omega}\|^2},
$$
where $\|E_{\Omega}\|$ denotes the norm of the linear continuous extension operator
$$
E_{\Omega}: L^{1,2}_A(\Omega)\to L_A^{1,2}(\widetilde{\Omega}).
$$
}

The suggested method is based on estimating the norm of extension operators on Sobolev spaces with the divergence norm:
\[
\|Eu \mid L_A^{1,2}(\mathbb R^2)\| \leq 2 \cdot \|u \mid L^{1,2}_{A}(\mathbb D_A)\|,
\,\,\,u\in L^{1,2}_{A}(\mathbb D_A).
\]

\begin{remark}
{\it The extension problem of quasiconformal planar mappings was studied in details by L.Ahlfors \cite{Ahl66}. He obtained a geometric description of such domains (so-called Ahlfors domains or quasidiscs)}.
\end{remark}

\section{Extension operators and $A$-quasiconformal mappings}

\subsection{Sobolev spaces}
Let $\Omega$ be a simply connected domain in $\mathbb{R}^2$.
For any $1\leq p<\infty$ we consider the Lebesgue space $L^p(\Omega)$ of measurable functions $u: \Omega \to \mathbb{R}$ equipped with the following norm:
\[
\|u\mid L^p(\Omega)\|=\biggr(\iint\limits _{\Omega}|u(z)|^{p}\, dxdy\biggr)^{\frac{1}{p}}<\infty.
\]

The Sobolev space $W^1_p(\Omega)$, $1\leq p<\infty$, is defined
as a Banach space of locally integrable weakly differentiable functions
$u:\Omega\to\mathbb{R}$ equipped with the following norm:
\[
\|u\mid W^{1,p}(\Omega)\|=\|u\,|\,L^{p}(\Omega)\|+\|\nabla u\mid L^{p}(\Omega)\|.
\]

The seminormed Sobolev space $L^{1,p}(\Omega)$, $1\leq p< \infty$,
is the space of all locally integrable weakly differentiable functions $u:\Omega\to\mathbb{R}$ equipped
with the following seminorm:
\[
\|u\mid L^{1,p}(\Omega)\|=\|\nabla u\mid L^p(\Omega)\|, \,\, 1\leq p<\infty.
\]

We also need a weighted seminormed Sobolev space $L_{A}^{1,2}(\Omega)$ (associated with the matrix $A$), defined
as the space of all locally integrable weakly differentiable functions $u:\Omega\to\mathbb{R}$ equipped with the following seminorm:
\[
\|u\mid L_{A}^{1,2}(\Omega)\|=\left(\iint\limits_\Omega \left\langle A(z)\nabla u(z),\nabla u(z)\right\rangle\,dxdy \right)^{\frac{1}{2}}.
\]

The corresponding  Sobolev space $W^{1,2}_{A}(\Omega)$ is defined
as the normed space of all locally integrable weakly differentiable functions
$u:\Omega\to\mathbb{R}$ equipped with the following norm:
\[
\|u\mid W^{1,2}_{A}(\Omega)\|=\|u\,|\,L^{2}(\Omega)\|+\|u\mid L^{1,2}_{A}(\Omega)\|.
\]

Recall that a continuous operator
\[
E:L^{1,2}_{A}(\Omega) \to L_A^{1,2}(\mathbb R^2)
\]
satisfying the conditions
$$
Eu|_{\Omega}=u \quad \text{and} \quad \|E\| :=\sup\limits_{u \in L^{1,2}_{A}(\Omega)} \frac{\|Eu\|_{L^{1,2}_A(\mathbb R^2)}}{\|u\|_{L^{1,p}_{A}(\Omega)}}<\infty
$$
is called an continuous extension operator on Sobolev spaces $L^{1,2}_A$.

We say that $\Omega\subset\mathbb R^2$ is \textit{a Sobolev $L^{1,2}_{A}$-extension domain} if there exists a continuous extension operator
$$
E:L^{1,2}_{A}(\Omega) \to L_A^{1,2}(\mathbb R^2).
$$

\begin{remark}
The spaces $L^{1,2}(\Omega)$ and $L^{1,2}_{A}(\Omega)$, $\Omega\subset\mathbb R^2$, both are spaces of weak differentiable functions with square integrable first derivatives equipped with equivalent seminorms. Hence Sobolev $L^{1,2}$-extension domains and Sobolev $L^{1,2}_{A}$-extension domains coincide and are Ahlfors domains (quasidics) \cite{VGL79}.
\end{remark}

It is well known that existence of an extension operator from $L^{k,p}(\Omega)$ to $L^{k,p}(\mathbb R^n)$, $k\in\mathbb N$, $n\geq2$, depends on the geometry of the domain $\Omega$. In the case of Lipschitz domains, Calder\'on \cite{Cal61} constructed an extension operator on $L^{k,p}(\Omega)$ for $1 < p < \infty$, $k\in\mathbb N$. Stein \cite{S70} extended Calder\'on's result to the endpoints $p=1, \infty$.
Jones \cite{Jon81} introduced an extension operator for locally uniform domains. This class is much broader then previous classes and includes examples of domains with highly non-rectifiable boundaries.

In \cite{VGL79} necessary and sufficient conditions were obtained for $L^{1,2}$-extensions operators from planar simply connected domains in terms of the quasiconformal geometry of domains. On this base in \cite{GPU2020S} were obtained sharp estimates of norms of the $L^{1,2}$-extension operators from Ahlfors domains.
Necessary and sufficient conditions for $L^{1,p}$-extension operators from planar simply connected domains were obtained in \cite{Shv16, Shv17} for $p>2$ and in the case $1<p<2$ in \cite{KRZ19}. Note, that in case $p\neq 2$ estimates of extension operators norms are unknown.

\subsection{Quasiconformal mappings associated with the matrix $A$}
Recall that a homeomorphism $\varphi: \Omega \to \widetilde{\Omega}$, where domains $\Omega,\, \widetilde{\Omega} \subset\mathbb C$, is called a $K$-quasiconformal mapping if $\varphi\in W^{1,2}_{\loc}({\Omega})$ and there exists a constant $1\leq K<\infty$ such that
$$
|D\varphi(z)|^2\leq K |J(z,\varphi)|\,\,\text{for almost all}\,\,z \in \Omega.
$$

Let us give a construction of $A$-quasiconformal mappings connected with the $A$-divergent form elliptic operators \cite{AIM}.
We suppose that matrix functions $A(z)=\left\{a_{kl}(z)\right\}$ belong to the class of all $2 \times 2$ symmetric matrix functions $A(w)=\left\{a_{kl}(w)\right\}$   with measurable entries defined in $\mathbb{R}^2$, such that $\textrm{det} A=1$ a.e. in $\mathbb R^2$, and are satisfied to the uniform ellipticity condition (\ref{UEC}).

The basic idea is that every positive quadratic form
\[
ds^2=a_{11}(x,y)dx^2+2a_{12}(x,y)dxdy+a_{22}(x,y)dy^2
\]
defined in a planar domain $\Omega$ can be reduced, by means of a quasiconformal change of variables, to the canonical form
\[
ds^2=\Lambda(du^2+dv^2),\,\, \Lambda\neq 0,\,\, \text{a.e. in}\,\, \widetilde{\Omega},
\]
given that $a_{11}a_{22}-a^2_{12} \geq \kappa_0>0$, $a_{11}>0$, almost everywhere in $\Omega$ \cite{Ahl66,BGMR}. We will use the complex variable  $z=x+iy$ that is more convenient for this study of the operator $\textrm{div} [A(z) \nabla f(z)]$.

Let $\xi(z)=\Real \varphi(z)$ be a real part of a quasiconformal mapping $\varphi(z)=\xi(z)+i \eta(z)$, which satisfies to the Beltrami equation:
\begin{equation}\label{BelEq}
\varphi_{\overline{z}}(z)=\mu(z) \varphi_{z}(z),\,\,\, \text{a.e. in}\,\,\, \Omega,
\end{equation}
where
$$
\varphi_{z}=\frac{1}{2}\left(\frac{\partial \varphi}{\partial x}-i\frac{\partial \varphi}{\partial y}\right) \quad \text{and} \quad
\varphi_{\overline{z}}=\frac{1}{2}\left(\frac{\partial \varphi}{\partial x}+i\frac{\partial \varphi}{\partial y}\right),
$$
with the complex dilatation $\mu(z)$ is given by
\begin{equation}\label{ComDil}
\mu(z)=\frac{a_{22}(z)-a_{11}(z)-2ia_{12}(z)}{\det(I+A(z))},\quad I= \begin{pmatrix} 1 & 0 \\ 0 & 1 \end{pmatrix}.
\end{equation}
We call this quasiconformal mapping (with the complex dilatation $\mu$ defined by (\ref{ComDil})) as an $A$-quasiconformal mapping.

Note that the uniform ellipticity condition \eqref{UEC} can be written as
\begin{equation*}\label{OVCE}
|\mu(z)|\leq \frac{K-1}{K+1},\,\,\, \text{a.e. in}\,\,\, \Omega.
\end{equation*}

Conversely we can obtain from the complex dilatation \eqref{ComDil} (see, for example, \cite{AIM}, p. 412) the matrix $A$ by the following way :
\begin{equation*}\label{Matrix-F}
A(z)= \begin{pmatrix} \frac{|1-\mu|^2}{1-|\mu|^2} & \frac{-2 \Imag \mu}{1-|\mu|^2} \\ \frac{-2 \Imag \mu}{1-|\mu|^2} &  \frac{|1+\mu|^2}{1-|\mu|^2} \end{pmatrix},\,\,\, \text{a.e. in}\,\,\, \Omega.
\end{equation*}

Hence, the given matrix  $A$ one produced, by \eqref{ComDil}, the complex dilatation $\mu(z)$, for which, in turn, the Beltrami equation \eqref{BelEq} induces a quasiconformal homeomorphism
$\varphi_A:\Omega \to \widetilde{\Omega}$ as its solution, by the Riemann measurable mapping theorem (see, for example, \cite{Ahl66,AIM}). We will say that the matrix function $A$ induces the corresponding $A$-quasiconformal homeomorphism $\varphi_A$ or that $A$ and $\varphi_A$ are agreed.

So, by the given $A$-divergent form elliptic operator $L_A$ defined in a domain $\Omega\subset\mathbb C$ we can construct so-called a $A$-quasiconformal mapping $\varphi_A:\Omega \to \widetilde{\Omega}$ with a quasiconformal coefficient
$$
K_A=\frac{1+\|\mu\mid L^{\infty}(\Omega)\|}{1-\|\mu\mid L^{\infty}(\Omega)\|},
$$
where $\mu$ defined by (\ref{ComDil}).

Note that the inverse mapping to the $A$-quasiconformal mapping $\varphi_A: \Omega \to \widetilde{\Omega}$ is the $A^{-1}$-quasiconformal mapping \cite{GPU2020}.

\begin{theorem} (\cite{GPU2020}) \label{th2.1}
Let $\Omega,\widetilde{\Omega}$ be domains in $\mathbb R^2$. Then a homeomorphism $\psi : \Omega \to \widetilde{\Omega}$ is an $A$-quasiconformal mapping if and only if $\psi$ generates by the composition rule
$\psi^{*}(\widetilde{u})= \widetilde{u} \circ \psi$ an isometry of Sobolev spaces $L^{1,2}_A(\Omega)$ and $L^{1,2}(\widetilde{\Omega})$, i.e.
$$
\|\psi^{*}(\widetilde{u}) \mid L^{1,2}_A(\Omega)\| = \|\widetilde{u} \mid L^{1,2}(\widetilde{\Omega})\|
$$
for any $\widetilde{u} \in L^{1,2}(\widetilde{\Omega})$.
\end{theorem}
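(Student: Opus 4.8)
The plan is to reduce the isometry property to a pointwise identity for the Jacobian matrix $D\psi$ and then to recognise that identity as the defining condition of an $A$-quasiconformal map. Assume first that $\psi$ is $A$-quasiconformal, hence quasiconformal. By the composition theory of \cite{VG75}, for every $\widetilde u\in L^{1,2}(\widetilde\Omega)$ the function $u:=\psi^{*}(\widetilde u)=\widetilde u\circ\psi$ lies in $L^{1,2}(\Omega)$ and satisfies the chain rule $\nabla u(z)=D\psi(z)^{T}(\nabla\widetilde u)(\psi(z))$ for a.e.\ $z$; moreover $\psi$ and $\psi^{-1}$ satisfy Lusin's conditions, so that the change-of-variables formula $\iint_{\Omega}g(\psi(z))|J(z,\psi)|\,dxdy=\iint_{\widetilde\Omega}g(w)\,dudv$ applies. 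Putting
\[
B(z):=\frac{D\psi(z)\,A(z)\,D\psi(z)^{T}}{|J(z,\psi)|},
\]
one then computes
\begin{align*}
\|u\mid L^{1,2}_{A}(\Omega)\|^{2}
&=\iint_{\Omega}\bigl\langle B(z)\,(\nabla\widetilde u)(\psi(z)),(\nabla\widetilde u)(\psi(z))\bigr\rangle\,|J(z,\psi)|\,dxdy\\
&=\iint_{\widetilde\Omega}\bigl\langle B(\psi^{-1}(w))\,\nabla\widetilde u(w),\nabla\widetilde u(w)\bigr\rangle\,dudv .
\end{align*}

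Hence the isometry $\|\psi^{*}(\widetilde u)\mid L^{1,2}_{A}(\Omega)\|=\|\widetilde u\mid L^{1,2}(\widetilde\Omega)\|$ for all $\widetilde u\in L^{1,2}(\widetilde\Omega)$ is equivalent to
\[
\iint_{\widetilde\Omega}\bigl\langle\bigl(B(\psi^{-1}(w))-I\bigr)\nabla\widetilde u(w),\nabla\widetilde u(w)\bigr\rangle\,dudv=0,\qquad \widetilde u\in L^{1,2}(\widetilde\Omega).
\]
Testing against functions that are affine near a Lebesgue point of the symmetric field $B\circ\psi^{-1}$ (cut off to have finite Dirichlet energy) forces $B(\psi^{-1}(w))=I$ for a.e.\ $w$, and pulling this back by the change-of-variables formula gives $D\psi(z)\,A(z)\,D\psi(z)^{T}=|J(z,\psi)|\,I$ a.e.\ in $\Omega$; since $\det A=1$ this is the same as $|J(z,\psi)|^{-1}D\psi(z)^{T}D\psi(z)=A(z)^{-1}$ a.e.

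It remains to identify this matrix identity with $A$-quasiconformality. For a sense-preserving homeomorphism $\psi\in W^{1,2}_{\loc}$ with Beltrami coefficient $\mu_{\psi}=\psi_{\overline z}/\psi_{z}$, a direct computation from $\psi_{x}=\psi_{z}(1+\mu_{\psi})$, $\psi_{y}=i\psi_{z}(1-\mu_{\psi})$ and $J(z,\psi)=|\psi_{z}|^{2}(1-|\mu_{\psi}|^{2})$ gives the distortion tensor
\[
\frac{D\psi(z)^{T}D\psi(z)}{|J(z,\psi)|}=\frac{1}{1-|\mu_{\psi}|^{2}}\begin{pmatrix}|1+\mu_{\psi}|^{2}&2\Imag\mu_{\psi}\\ 2\Imag\mu_{\psi}&|1-\mu_{\psi}|^{2}\end{pmatrix},
\]
while inverting the matrix expressing $A$ through $\mu$ displayed above (and using $\det A=1$) shows that $A(z)^{-1}$ is exactly this matrix with $\mu_{\psi}$ replaced by the dilatation $\mu$ of \eqref{ComDil}. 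The correspondence $\mu\mapsto(\text{distortion tensor})$ is injective — the trace determines $|\mu|^{2}$, and then $\Real\mu$ and $\Imag\mu$ are read off from the remaining entries — so $|J(z,\psi)|^{-1}D\psi^{T}D\psi=A^{-1}$ a.e.\ is equivalent to $\mu_{\psi}=\mu$ a.e., i.e.\ to $\psi$ being $A$-quasiconformal. This settles the "only if" direction, and running the equivalences backwards gives the converse, with one preparatory step: one first applies the assumed isometry to the coordinate functions $w_{1},w_{2}\in L^{1,2}(\widetilde\Omega)$ and uses \eqref{UEC} to conclude $\Real\psi,\Imag\psi\in L^{1,2}(\Omega)$, hence $\psi\in W^{1,2}_{\loc}(\Omega)$; the matrix identity then also yields $|D\psi(z)|^{2}\le K|J(z,\psi)|$ a.e.\ (the eigenvalues of $A^{-1}$ lie in $[1/K,K]$), so $\psi$ is genuinely quasiconformal and the computations above are legitimate. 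A planar $W^{1,2}_{\loc}$ homeomorphism has a Jacobian of constant sign a.e., so reducing to the sense-preserving case costs nothing.

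The hard part is not the algebra but the measure-theoretic bookkeeping: justifying the chain rule and the change-of-variables formula for compositions of $L^{1,2}$-functions with planar Sobolev homeomorphisms (Lusin's conditions $N$ and $N^{-1}$, a.e.\ differentiability, absolute continuity on lines), and making rigorous the localisation that upgrades the integral identity valid for all test functions to the pointwise equality $B=I$ a.e. With those in hand, the identification of $A^{-1}$ with the distortion tensor attached to $\mu$ is routine. An alternative, more geometric route is to note that $\psi$ is $A$-quasiconformal precisely when $\psi\circ\varphi_{A}^{-1}$ is conformal, and then to combine the isometry property of $\varphi_{A}$ itself with the classical fact that conformal ($1$-quasiconformal) maps induce $L^{1,2}$-isometries by composition \cite{VG75}.
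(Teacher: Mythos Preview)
The paper does not actually prove Theorem~\ref{th2.1}; it is quoted from \cite{GPU2020} and used as a black box, so there is no in-paper argument to compare against.

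That said, your sketch is sound and follows the natural route one would expect in \cite{GPU2020}: reduce the isometry statement, via the chain rule and the change-of-variables formula for quasiconformal homeomorphisms, to the pointwise matrix identity $D\psi(z)\,A(z)\,D\psi(z)^{T}=|J(z,\psi)|\,I$, and then recognise this (equivalently, $|J|^{-1}D\psi^{T}D\psi=A^{-1}$) as the Beltrami equation $\psi_{\overline z}=\mu\psi_z$ with $\mu$ given by \eqref{ComDil}. Your computation of the distortion tensor and its match with the displayed formula for $A$ (hence for $A^{-1}$) is correct, and the injectivity argument for $\mu\mapsto A$ is fine. The alternative route you mention at the end --- factor $\psi$ as a conformal map composed with the fixed $A$-quasiconformal normaliser $\varphi_A$ and invoke the classical $L^{1,2}$-isometry property of conformal maps --- is in fact the cleanest way to handle the forward direction and is presumably closer to what \cite{GPU2020} does.

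One small point in the converse: applying the assumed isometry to the bare coordinate functions $w_1,w_2$ requires $|\widetilde\Omega|<\infty$ for them to lie in $L^{1,2}(\widetilde\Omega)$. You already know the fix (localise with cutoffs, as you do in the testing step), but it should be said explicitly; once $\psi\in W^{1,2}_{\loc}$ is established, the rest of your argument goes through and the analytic prerequisites you flag (Lusin $N$ and $N^{-1}$, a.e.\ differentiability, chain rule) are standard for planar quasiconformal maps and can be cited from \cite{Ahl66,AIM}.
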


This theorem generalizes the well known property of conformal mappings to generate the isometry of uniform Sobolev spaces $L^{1,2}(\Omega)$ and $L^{1,2}(\widetilde{\Omega})$ (see, for example, \cite{C50}) and refines (in the case $n=2$) the functional characterization of quasiconformal mappings in the terms of isomorphisms of uniform Sobolev spaces \cite{VG75}.

\section{Spectral estimates}

The theory of extension operators on Sobolev spaces permit us to obtain lower estimates of the first non-trivial Neumann eigenvalues of the divergence form elliptic operators $-\textrm{div} [A(z) \nabla u(z)]$ in Ahlfors domains (quasidiscs) $\Omega \subset \mathbb R^2$.


\vskip 0.3cm
\noindent
{\bf Theorem C.}
\textit{ Let $\Omega\subset \mathbb R^2$ and $\widetilde{\Omega}\subset \mathbb R^2$ be Ahlfors domains. Suppose $\widetilde{\Omega} \supset \Omega$, then
$$
\mu_1(A,\Omega) \geq \frac{\mu_1(A,\widetilde{\Omega})}{\|E_{\Omega}\|^2},
$$
where $\|E_{\Omega}\|$ denotes the norm of the linear continuous extension operator
$$
E_{\Omega}: L^{1,2}_A(\Omega)\to L^{1,2}_A(\widetilde{\Omega}).
$$
}

\begin{proof}
Since $\Omega$ is the Ahlfors domain and the matrix A satisfies to the uniform ellipticity condition
there exists the linear extension operator \cite{VGL79}
\begin{equation}\label{ExtOper}
E_{\Omega}:L^{1,2}_A(\Omega) \to L^{1,2}_A(\widetilde{\Omega})
\end{equation}
defined by formula
\[ (E_{\Omega}u)(z) = \begin{cases}
u(z) & \text{if $z \in \Omega$,} \\
\widetilde{u}(z) & \text{if $z \in \widetilde{\Omega} \setminus \overline{\Omega}$,}
\end{cases} \]
where $\widetilde{u}:\widetilde{\Omega} \setminus \overline{\Omega} \to \mathbb R$ is an extension of the function $u$.

Hence for every function $u \in W^{1,2}_A(\Omega)$ we have
\begin{multline}\label{ineq1}
\|u-u_{\Omega} \mid L^2(\Omega)\| \\
{} =\inf\limits_{c\in \mathbb R}\|u-c\mid L^2(\Omega)\|=\inf\limits_{c\in \mathbb R}\|E_{\Omega}u-c\mid L^2(\Omega)\| \\
{} \leq \|E_{\Omega}u-(E_{\Omega}u)_{\widetilde{\Omega}}\mid L^2(\Omega)\| \leq \|E_{\Omega}u-(E_{\Omega}u)_{\widetilde{\Omega}} \mid L^2(\widetilde{\Omega})\|.
\end{multline}

Here $u_{\Omega}$ and $(E_{\Omega}u)_{\widetilde{\Omega}}$ are mean values of corresponding functions $u$ and $E_{\Omega}u$.

Given the Sobolev-Poincar\'e inequality in Ahlfors domains (see, for example, \cite{GU16})
$$
\inf\limits _{c \in \mathbb R} \|E_{\Omega}u-c \mid L^2(\widetilde{\Omega})\| \leq B_{2,2}(A,\widetilde{\Omega}) \|E_{\Omega}u \mid L^{1,2}_A(\widetilde{\Omega})\|, \quad u \in W^{1,2}_A(\widetilde{\Omega}),
$$
and also the continuity of the extension operator \eqref{ExtOper}, i.~e.,
$$
\|E_{\Omega}u \mid L^{1,2}_A(\widetilde{\Omega})\| \leq \|E_{\Omega}\| \cdot \|u \mid L^{1,2}_A(\Omega)\|, \quad \|E_{\Omega}\|< \infty,
$$
we obtain
\begin{multline}\label{ineq2}
\|E_{\Omega}u-(E_{\Omega}u)_{\widetilde{\Omega}} \mid L^2(\widetilde{\Omega})\| \\
\leq B_{2,2}(A,\widetilde{\Omega}) \cdot \|E_{\Omega}u \mid L^{1,2}_A(\widetilde{\Omega})\|
\leq B_{2,2}(A,\widetilde{\Omega}) \cdot \|E_{\Omega}\| \cdot \|u \mid L^{1,2}_A(\Omega)\|.
\end{multline}
Combining inequalities \eqref{ineq1} and \eqref{ineq2} we have
$$
\|u-u_{\Omega} \mid L^2(\Omega)\| \leq B_{2,2}(A,\Omega) \cdot \|u \mid L^{1,2}_A(\Omega)\|,
$$
where
$$
B_{2,2}(A,\Omega) \leq B_{2,2}(A,\widetilde{\Omega}) \cdot \|E_{\Omega}\|.
$$
According to the Min-Max Principle \cite{D95,M}, $\mu_1(A,\Omega)^{-1}=B_{2,2}^2(A,\Omega)$.
Thus, we have
$$
\mu_1(A,\Omega) \geq \frac{\mu_1(A,\widetilde{\Omega})}{\|E_{\Omega}\|^2}.
$$
\end{proof}

From this theorem, using inequalities~\eqref{UEC} about the uniform ellipticity of the matrix $A$ we obtain

\vskip 0.3cm
\noindent
{\bf Corollary D.}
\textit{ Let $\Omega\subset \mathbb R^2$ and $\widetilde{\Omega}\subset \mathbb R^2$ be Ahlfors domains. Suppose $\widetilde{\Omega} \supset \Omega$, then
$$
\mu_1(A,\Omega) \geq \frac{\mu_1(\widetilde{\Omega})}{K\|E\|^2},
$$
where $\mu_1(\widetilde{\Omega})$ is the first non-trivial Neumann eigenvalues of Laplacian in $\widetilde{\Omega}$, $\|E_{\Omega}\|$ denoted the norm of the linear continuous extension operator
$$
E: L^{1,2}_A(\Omega)\to L^{1,2}_A(\widetilde{\Omega}).
$$
}

\vskip 0.3cm

Let us present the construction of the extension operator for domains $\mathbb{D}_A$ that are analogs of the unit disc for the Riemannian metric induced by the matrix $A$. As in the classical case we will use  quasiconformal reflections and Theorem~\ref{th2.1}.

Since there is a M\"obius transformation of the unit disc $\mathbb D \subset \mathbb R^2$ onto the upper halfplane $H^{+}$ we can replace in the definition of domains $\mathbb{D}_A$  the unit disc
$\mathbb D$ to the upper halfplane $H^{+}$.

Consider the following diagram for $A$-quasiconformal mappings
\[
\begin{CD}
L^{1,2}_{A}(\mathbb{D}_A) @>(\varphi^{-1})^{*}>> L^{1,2}(H^{+})\\
@V{\widetilde{\varphi}^{*}\circ\omega\circ(\varphi^{-1})^{*}}VV  @VV{\omega}V \\
L_A^{1,2}(\mathbb R^2) @<\widetilde{\varphi}^{*}<< L^{1,2}(\mathbb R^2)
\end{CD}
\]
where $\omega$ is a symmetry with respect to the real axis,
that  extend any function $u \in L^{1,2}(H^{+})$ to a function $\widetilde{u}$ from $L^{1,2}(\mathbb R^2)$.
In this case $\left\| \omega \right\|=2$.

In accordance to this diagram we define the extension operator on Sobolev spaces
$$
E:L^{1,2}_A(\Omega) \to L_A^{1,2}(\mathbb R^2)
$$
by formula
\[ (Eu)(z) = \begin{cases}
u(z) & \text{if $z \in \Omega$,} \\
\widetilde{u}(z) & \text{if $z \in \mathbb R^2 \setminus \overline{\Omega}$,}
\end{cases}
\]
where $\widetilde{u}:\mathbb R^2 \setminus \overline{\Omega} \to \mathbb R$ is defined as $\widetilde{u}=\varphi^{*}\circ\omega\circ(\varphi^{-1})^{*}u$.

By Theorem~\ref{th2.1} operators $\varphi_A^{*}$ and $(\varphi_A^{-1})^{*}$ are isometries and $\left\| \omega \right\|=2$ then $\left\| E \right\|=2$.

Let $D$ be a smallest disc that contains $\mathbb{D}_A$. Putting $\widetilde{\Omega}=D$ in Corollary~D and taking into account the estimate of the norm of extension operators in $\mathbb{D}_A$ we obtain the following result:
\vskip 0.3cm
\noindent
{\bf Theorem A.}
\textit{Let $\mathbb D_A$ be an $A$-quasidisc. Then the following inequality holds
\begin{equation*}
\mu_1(A,\mathbb{D}_A) \geq  \frac{1}{4K} \cdot \left(\frac{j'_{1,1}}{R_{\mathbb{D}_A}}\right)^2,
\end{equation*}
where $K$ is the ellipticity constant, $R_{\mathbb{D}_A}=\max\limits_{\mathbb{D}(0,1)}|\varphi^{-1}(x)-\varphi^{-1}(0)|$ and $j'_{1,1}\approx 1.84118$ denotes the first positive zero of the derivative of the Bessel function $J_1$.
}
\vskip 0.3cm
Now we give estimates for the radius $R_{\mathbb{D}_A}$ of the smallest disc that contains  $\mathbb{D}_A$. This result is a partial solution of the smallest-circle problem (initially suggested by J.~J. Sylvester in 1857).

\begin{theorem}
\label{sil}
Let $\varphi : \mathbb R^2 \to \mathbb R^2$ be an $A$-quasiconformal mapping. Then the following estimate
$$
R_{\mathbb{D}_A}\leq \frac{1}{16} e^{\pi K_A}
$$
holds, where $K_A$ is the quasiconformality coefficient of the mapping $\varphi.$
\end{theorem}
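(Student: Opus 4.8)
The plan is to reduce the estimate, via the normalisation $|\mathbb D_A|=|\mathbb D|=\pi$, to a purely quasiconformal distortion bound. Write $w_0:=\varphi^{-1}(0)$, and set $R_0:=R_{\mathbb D_A}=\max_{|x|\le 1}|\varphi^{-1}(x)-w_0|$ and $r_0:=\dist(w_0,\partial\mathbb D_A)$. Since $\varphi^{-1}=\varphi_A^{-1}$ is a homeomorphism of $\mathbb R^2$ it carries $\overline{\mathbb D}$ onto $\overline{\mathbb D_A}$ and $\partial\mathbb D$ onto the Jordan curve $\partial\mathbb D_A$, so the nearest and farthest points of $\overline{\mathbb D_A}$ from $w_0$ lie on $\partial\mathbb D_A$; hence $r_0=\min_{|x|=1}|\varphi^{-1}(x)-w_0|$, $R_0=\max_{|x|=1}|\varphi^{-1}(x)-w_0|$, and $\mathbb D(w_0,r_0)\subset\mathbb D_A\subset\mathbb D(w_0,R_0)$. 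From $\pi r_0^2\le|\mathbb D_A|=\pi$ we get $r_0\le 1$, so it suffices to establish the ``circular distortion'' bound $R_0/r_0\le\tfrac1{16}e^{\pi K_A}$ for $\varphi^{-1}$ at $w_0$.

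For this I would use that $\varphi=\varphi_A$ is a $K_A$-quasiconformal homeomorphism of the whole plane, hence $\psi:=\varphi^{-1}$ (an $A^{-1}$-quasiconformal mapping, again with coefficient $K_A$) extends to a $K_A$-quasiconformal homeomorphism of $\widehat{\mathbb C}$ fixing $\infty$; after a translation we may assume $w_0=0$, so $\psi(0)=0$ and $\psi(\infty)=\infty$. Choosing $a,b$ with $|a|=|b|=1$, $|\psi(a)|=R_0$, $|\psi(b)|=r_0$ (if $a=b$ there is nothing to prove), the desired inequality is the circular-distortion estimate $|\psi(a)|/|\psi(b)|\le\tfrac1{16}e^{\pi K_A}$ for the $K_A$-quasiconformal self-map $\psi$ of $\widehat{\mathbb C}$ fixing $0$ and $\infty$. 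This is the content of the classical distortion theorems for plane quasiconformal mappings: for such $\psi$ and $|z|=|z'|$ one has $|\psi(z)|/|\psi(z')|\le\lambda(K_A)$, where $\lambda$ is the distortion function of the Teichm\"uller ring, subject to the sharp bound $\lambda(K)\le\tfrac1{16}e^{\pi K}$ for $K\ge 1$ (with equality asymptotically as $K\to\infty$). Combining this with $r_0\le 1$ gives $R_{\mathbb D_A}=R_0\le\tfrac1{16}e^{\pi K_A}\,r_0\le\tfrac1{16}e^{\pi K_A}$.

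To exhibit the mechanism (and to locate the difficulty) one can proceed through ring moduli: applying $\varphi$ — defined on all of $\mathbb R^2$ — to the round annulus $T=\{\,r_0<|w-w_0|<R_0\,\}$ produces a ring domain $\varphi(T)$ whose bounded complementary component $\varphi(\overline{\mathbb D(w_0,r_0)})$ is a continuum in $\overline{\mathbb D}$ joining $0$ to a point of $\partial\mathbb D$, and whose unbounded one $\varphi(\{\,|w-w_0|\ge R_0\,\}\cup\{\infty\})$ is a continuum in $\widehat{\mathbb C}\setminus\mathbb D$ joining a point of $\partial\mathbb D$ to $\infty$. Teichm\"uller's modulus theorem bounds $\mathrm{mod}\,\varphi(T)$ by the modulus $\tfrac12$ of the symmetric Teichm\"uller ring $R_T(1)$, while quasi-invariance of the modulus gives $\mathrm{mod}\,\varphi(T)\ge K_A^{-1}\mathrm{mod}\,T=\tfrac1{2\pi K_A}\log(R_0/r_0)$; together these already yield $R_0/r_0\le e^{\pi K_A}$. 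Thus the exponential rate is ``soft'' — it is just $\exp\!\bigl(2\pi K_A\cdot\mathrm{mod}\,R_T(1)\bigr)$ — and the genuinely delicate point, which I expect to be the main obstacle, is recovering the extra factor $\tfrac1{16}$: this requires the refined form of the Teichm\"uller/Gr\"otzsch modulus estimate exploiting that the two complementary continua above are actual quasidiscs (resp.\ complements of quasidiscs) rather than degenerate slits, equivalently the precise asymptotics $\lambda(K)=\tfrac1{16}e^{\pi K}\bigl(1+o(1)\bigr)$.
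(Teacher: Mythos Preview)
Your proof is correct and follows exactly the paper's two-step strategy: use the area normalisation $|\mathbb D_A|=\pi$ to bound $r_0\le 1$, then invoke the global distortion estimate $R_0/r_0\le\tfrac1{16}e^{\pi K_A}$ (for which the paper simply cites Martin's theorem \cite{Ma}). Your ring-modulus paragraph is additional exposition of the mechanism behind that black-box citation, but the approach is identical.
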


\begin{proof}
Let $r_{\mathbb{D}_A}=\min\limits_{\mathbb{D}(0,1)}|\varphi^{-1}(x)-\varphi^{-1}(0)|$. Then by the global distortion theorem \cite{Ma} we have
$$
R_{\mathbb{D}_A}\leq \frac{1}{16} e^{\pi K_A}r_{\mathbb{D}_A}.
$$
Because $|\mathbb{D}(\varphi(0),r_{\mathbb{D}_A}|\leq|\mathbb{D}_A|=\pi$ we obtain an upper estimate for
$r_{\mathbb{D}_A}\leq 1$.
Therefore
$$
R_{\mathbb{D}_A}\leq \frac{1}{16} e^{\pi K_A}.
$$
\end{proof}

Now recall that
$$
K_A=\frac{1+\|\mu\mid L^{\infty}(\Omega)\|}{1-\|\mu\mid L^{\infty}(\Omega)\|},
$$
where
$$
\mu(z)=\frac{a_{22}(z)-a_{11}(z)-2ia_{12}(z)}{\det(I+A(z))},\quad I= \begin{pmatrix} 1 & 0 \\ 0 & 1 \end{pmatrix}.
$$
It means that $K_A$ is a function of the matrix $A$ only.

Hence, combining Theorem~A and Theorem~\ref{sil} we obtain a connection between principal frequencies of free
non-homogeneous membranes and the Sylvester smallest-circle problem (1857).

\begin{theorem}
Let $\mathbb D_A$ be an $A$-quasidisc. Then the following inequality holds
\begin{equation*}
\mu_1(A,\mathbb{D}_A) \geq  \frac{64 (j'_{1,1})^2}{K}  \exp \left\{ -2\pi \frac{1+\|\mu\mid L^{\infty}(\Omega)\|}{1-\|\mu\mid L^{\infty}(\Omega)\|} \right\},
\end{equation*}
where $K$ is the ellipticity constant and  $j'_{1,1}\approx 1.84118$ denotes the first positive zero of the derivative of the Bessel function
$J_1$.
\end{theorem}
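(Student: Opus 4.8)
The plan is to obtain the statement as a direct corollary of Theorem~A and Theorem~\ref{sil}, followed by unwinding the definition of the quasiconformality coefficient $K_A$ in terms of the complex dilatation $\mu$. No new analytic ingredient is required; the argument is a short chain of substitutions, so I would present it as a brief combination of the two preceding results.

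First I would invoke Theorem~A, which yields
$$
\mu_1(A,\mathbb{D}_A)\geq \frac{1}{4K}\left(\frac{j'_{1,1}}{R_{\mathbb{D}_A}}\right)^2,
$$
where $R_{\mathbb{D}_A}$ is the radius of the smallest disc containing the $A$-quasidisc $\mathbb{D}_A$. Next I would apply Theorem~\ref{sil}, which gives $R_{\mathbb{D}_A}\leq \tfrac{1}{16}\,e^{\pi K_A}$, where $K_A$ is the quasiconformality coefficient of the $A$-quasiconformal mapping $\varphi_A:\mathbb{R}^2\to\mathbb{R}^2$; this is legitimate since $\mathbb{D}_A$ is a quasidisc (an Ahlfors domain) and $\varphi_A$ is globally defined on $\mathbb{R}^2$, so the hypotheses of both theorems are automatically satisfied. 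Taking reciprocals and squaring gives $R_{\mathbb{D}_A}^{-2}\geq 256\,e^{-2\pi K_A}$, and substituting into the bound from Theorem~A produces
$$
\mu_1(A,\mathbb{D}_A)\geq \frac{256\,(j'_{1,1})^2}{4K}\,e^{-2\pi K_A}=\frac{64\,(j'_{1,1})^2}{K}\,e^{-2\pi K_A}.
$$

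Finally I would recall the explicit formula $K_A=\dfrac{1+\|\mu\mid L^{\infty}(\Omega)\|}{1-\|\mu\mid L^{\infty}(\Omega)\|}$ with $\mu$ given by~\eqref{ComDil}, so that $K_A$ depends only on the entries of the matrix $A$; inserting this expression into the exponent above yields exactly the claimed inequality. The only points requiring care are the elementary arithmetic with the constants (namely $256/4=64$) and the observation — needed for consistency with Theorem~B — that $K_A\leq K$ under the uniform ellipticity condition~\eqref{UEC}, which makes the present estimate at least as strong as the one stated there. Accordingly I anticipate no genuine obstacle: the theorem is a repackaging of Theorems~A and~\ref{sil} expressed through $\|\mu\mid L^{\infty}(\Omega)\|$.
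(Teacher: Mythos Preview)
Your proposal is correct and follows exactly the paper's own approach: the theorem is stated immediately after Theorem~A and Theorem~\ref{sil} with the remark that combining them and recalling the formula $K_A=\dfrac{1+\|\mu\mid L^{\infty}(\Omega)\|}{1-\|\mu\mid L^{\infty}(\Omega)\|}$ yields the result. Your arithmetic and chain of substitutions match the paper's implicit computation.
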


Because $|\mu(z)|\leq \frac{K-1}{K+1}$ we have by the direct calculations
$$
\frac{1+\|\mu\mid L^{\infty}(\Omega)\|}{1-\|\mu\mid L^{\infty}(\Omega)\|} \leq K.
$$

Hence we obtain:

\vskip 0.3cm
\noindent
{\bf Theorem B.}
\textit{Let $\mathbb D_A$ be an $A$-quasidisc. Then the following inequality holds
\begin{equation*}
\mu_1(A,\mathbb{D}_A) \geq  \frac{ 64 (j'_{1,1})^2}{K \exp \{2\pi K\}},
\end{equation*}
where $K$ is the ellipticity constant and  $j'_{1,1}\approx 1.84118$ denotes the first positive zero of the derivative of the Bessel function $J_1$.
}

This theorem gives estimates of principal frequencies of free non-homogeneous membranes in terms of the quasiconformal geometry of domains \cite{AIM}.
\vskip 0.3cm

\textbf{Acknowledgments.} The first author was supported by the United States-Israel Binational Science Foundation (BSF Grant No. 2014055). The second author was supported by RSF Grant No. 20-71-00037 (Section 3).



\vskip 0.3cm

\noindent
Vladimir Gol'dshtein: Department of Mathematics, Ben-Gurion University of the Negev, P.O.Box 653, Beer Sheva, 8410501, Israel

\noindent
\emph{E-mail address:} \email{vladimir@math.bgu.ac.il} \\

\noindent
Valerii Pchelintsev: Division for Mathematics and Computer Sciences, Tomsk Polytechnic University,
634050 Tomsk, Lenin Ave. 30, Russia \\
Department of Mathematical Analysis and Theory of Functions,
Tomsk State University, 634050 Tomsk, Lenin Ave. 36, Russia

\noindent							
\emph{E-mail address:} \email{vpchelintsev@vtomske.ru}   \\

\noindent			
Alexander Ukhlov:	Department of Mathematics, Ben-Gurion University of the Negev, P.O.Box 653, Beer Sheva, 8410501, Israel

\noindent							
\emph{E-mail address:} \email{ukhlov@math.bgu.ac.il

\end{document}